\newtheorem{Theorem}{Theorem}[section]
\newtheorem{Corollary}[Theorem]{Corollary}
\newtheorem{Proposition}[Theorem]{Proposition}
\newtheorem{Remark}[Theorem]{Remark}
\def\qed{\ifhmode\textqed\fi
	\ifmmode\ifinner\quad\qedsymbol\else\dispqed\fi\fi}
\def\textqed{\unskip\nobreak\penalty50
	\hskip2em\hbox{}\nobreak\hfill\qedsymbol
	\parfillskip=0pt \finalhyphendemerits=0}
\def\dispqed{\rlap{\qquad\qedsymbol}}
\def\m{\mathfrak{m}}
\def\depth{\textup{depth\,}}
\def\im{\textup{im}}
\def\pd{\textup{proj\,dim}}
\def\reg{\textup{reg}}
\def\comp{\textup{comp}}
\def\del{\textup{del}}
\def\lk{\textup{lk}}
\newcommand{\precdot}{\prec\mathrel{\mkern-3mu}\mathrel{\cdot}}
\begin{document}
	
	\title{Algebraic study on permutation graphs}
	\author{Antonino Ficarra, Somayeh Moradi}

	\address{Antonino Ficarra, Departamento de Matem\'{a}tica, Escola de Ci\^{e}ncias e Tecnologia, Centro de Investiga\c{c}\~{a}o, Matem\'{a}tica e Aplica\c{c}\~{o}es, Instituto de Investiga\c{c}\~{a}o e Forma\c{c}\~{a}o Avan\c{c}ada, Universidade de \'{E}vora, Rua Rom\~{a}o Ramalho, 59, P--7000--671 \'{E}vora, Portugal}
	\email{antonino.ficarra@uevora.pt}\email{antficarra@unime.it}
	
	\address{Somayeh Moradi, Department of Mathematics, Faculty of Science, Ilam University, P.O.Box 69315-516, Ilam, Iran}
	\email{so.moradi@ilam.ac.ir}
	
	\subjclass[2020]{Primary 13C05, 13C14; Secondary 05E40}
	\keywords{Cohen-Macaulay, Gorenstein, permutation graph, edge ideal}
	%\thanks{}
	
	\begin{abstract}
		Let $G$ be a permutation graph. We show that $G$ is Cohen-Macaulay if and only if $G$ is unmixed and vertex decomposable. When this is the case, we obtain a combinatorial description for the $a$-invariant of $G$. Moreover, we characterize the Gorenstein permutation graphs.  
		
	\end{abstract}
	
	\maketitle\vspace*{-1.1em}
	\section*{Introduction}
	
	Permutation graphs arise naturally in combinatorics and graph theory. They are characterized as those graphs which are both comparability and co-comparability graphs of posets~\cite{PLE}, making them an intriguing subject of algebraic and combinatorial investigation. For a finite simple graph $G$ on $n$ vertices and with the edge set $E(G)$, the {\em edge ideal} of $G$, introduced by Villarreal~\cite{Vi}, is the ideal of the polynomial ring $S=K[x_i:i\in V(G)]$ over a field $K$ defined as $I(G)=(x_ix_j: \{i,j\}\in E(G))$. The main theme in the study of edge ideals is to translate the algebraic properties of the ring $S/I(G)$ to the combinatorics of the underlying graph $G$ and vice versa. The study of the Cohen-Macaulay  property of graphs has been well-established for various classes of graphs such as bipartite graphs~\cite{HH05}, very well-covered graphs~\cite{CRT,MMCRTY}, chordal graphs~\cite{HHZ},  Cameron-Walker graphs~\cite{HHKO}, fully-whiskered graphs~\cite{CN} and graphs of girth at least five~\cite{BC}. For the aforementioned classes of graphs it is shown that $G$ is Cohen-Macaulay if and only if $G$ is  unmixed and vertex decomposable.  
	
	Vertex decomposable simplicial complexes were introduced by Provan and Billera in~\cite{PB}. Their recursive definition allows to determine algebraic invariants of their Stanley-Reisner rings inductively, see~\cite{MK}. A graph $G$ is called {\em vertex decomposable} if the independence complex of $G$ is vertex decomposable. These graphs were first considered by Dochtermann-Engstr\"om~\cite{DE} and Woodroofe~\cite{W1}. 
	Any unmixed, vertex decomposable graph is Cohen-Macaulay, but the converse does not hold in general.
	
	In this work, we study the Cohen-Macaulay and the Gorenstein properties for permutation graphs. These graphs were first introduced in~\cite{EPL} and~\cite{PLE}. They form a subclass of weakly chordal graphs, as was shown in~\cite{G}. Different characterizations of permutation graphs are given in~\cite{G,L,PLE}. In Theorem~\ref{vd} we show that a permutation graph is Cohen-Macaulay if and only if it is unmixed and vertex decomposable.  
	To this aim, we use a characterization of   Cohen-Macaulay permutation graphs given in~\cite[Theorem 1.1]{CDKKV} in terms of the maximal cliques of the graph.
	This implies that the cover ideal $J(G)$ of a Cohen-Macaulay permutation graph is vertex splittable (Corollary \ref{Cor:vs}) and that the Rees algebra $\mathcal{R}(J(G))$ and the toric algebra $K[J(G)]$ are normal Cohen-Macaulay domains (Corollary \ref{Rees}).  
	
	In Theorem~\ref{Goren} we characterize the Gorenstein permutation graphs. To prove it, we use a result by Oboudi and Nikseresht~\cite{ON} regarding the Gorenstein graphs. In Proposition~\ref{a-inv} we obtain the $a$-invariant of a Cohen-Macaulay permutation graph in terms of the induced matching number and the vertex cover number of $G$. As a consequence we determine when $I(G)$ is Hilbertian, that is the Hilbert function and the Hilbert polynomial of $S/I(G)$ coincide. We conclude the paper with Proposition~\ref{biCM} which gives a combinatorial description for bi-Cohen-Macaulay graphs.
	
	\section{Preliminaries}
	
	In this section, we recall some concepts and introduce some notation which are needed in the sequel. Throughout, $G$ is a finite simple graph with the vertex set $V(G)=[n]=\{1,2,\dots,n\}$ and the edge set $E(G)$, and $S=K[x_1,\dots,x_n]$ is the polynomial ring over a field $K$. The {\em edge ideal} of $G$ is defined as the ideal of $S$, $$I(G)=(x_ix_j:\ \{i,j\}\in E(G)).$$
	
	The graph $G$ is called {\em Cohen-Macaulay}, respectively {\em Gorenstein}, if $S/I(G)$ is a Cohen-Macaulay, respectively Gorenstein ring. A subset $F\subseteq V(G)$ is called an {\em independent set} of $G$, if it contains no edge of $G$. The maximal cardinality of independent sets of $G$ is denoted by $\alpha(G)$. 
	
	A {\em vertex cover} of $G$ is a subset $C\subseteq V(G)$ which intersects all the edges of $G$ and a vertex cover which is minimal with respect to inclusion is called a {\em minimal vertex cover} of $G$. 
	The graph $G$ is called {\em unmixed} if all the minimal vertex covers of $G$ have the same cardinality.
	The {\em vertex cover number} of $G$ is defined as the minimum cardinality of the vertex covers of $G$ and is denoted by $\tau(G)$. A subset $A\subseteq V(G)$ is called a {\em clique} of $G$ if any two vertices in $A$ are adjacent in $G$. A {\em maximal clique} is a clique of $G$ which is not contained in any other clique of $G$.

	A {\em matching} of $G$ is a subset of $E(G)$ consisting of pairwise disjoint edges of $G$. The maximum size of matchings of $G$ is denoted by $\textup{m}(G)$. We say that the edges $e$ and $e'$ form a {\em gap} in $G$, if they are disjoint and no vertex in $e$ is adjacent to a vertex in $e'$. A subset $E$ of edges {\em forms a gap} in $G$, when each two elements in $E$ form a gap in $G$. The maximum cardinality of a set $E\subseteq E(G)$ which forms a gap is called the {\em induced matching number} of $G$ and is denoted by $\im(G)$.
	
	For a graph $G$, the \textit{complementary graph} of $G$ is the graph $G^c$ with the same vertex set as $G$ whose edges are the non-edges of $G$.

	For a simplicial complex $\Delta$ and a face $F\in \Delta$, the {\em link} of $F$ in
	$\Delta$ is defined as $$\lk_{\Delta}(F)=\{G\in \Delta:\ G\cap
	F=\emptyset,\, G\cup F\in \Delta\},$$ and the {\em deletion} of $F$ is the
	simplicial complex $$\del_{\Delta}(F)=\{G\in \Delta:\ G\cap
	F=\emptyset\}.$$
	
	A simplicial complex $\Delta$ is  called  {\em vertex decomposable} if either
	$\Delta$ is a simplex, or $\Delta$ contains a vertex $x$ such that
	\begin{itemize}
		\item[(i)] both $\del_{\Delta}(x)$ and $\lk_{\Delta}(x)$ are vertex decomposable, and
		\item[(ii)] any facet of $\del_{\Delta}(x)$ is a facet of $\Delta$.
	\end{itemize}
	A vertex $x$ which satisfies condition (ii) is called a {\em shedding vertex} of $\Delta$.\smallskip
	
	The {\em independence complex} of a graph $G$ is defined as the simplicial complex
	$$\Delta_G=\{F\subseteq V(G):\  F \text{ is an independent set of } G  \}.$$  
	
	The graph $G$ is called {\em vertex decomposable} if $\Delta_G$ is vertex decomposable.
	
	Vertex decomposability has a nice translation  to independence complexes of graphs. For a vertex $i\in V(G)$, let $N_G(i)$ be the set of all vertices of $G$ adjacent to $i$ and let $N_G[i]=N_G(i)\cup\{i\}$. Translating the definition of vertex decomposable to independence complexes of graphs we have that:
	
	A graph $G$ is vertex decomposable, if either $G$ consists of isolated vertices or it has a vertex $i$ such that
	
	\begin{enumerate}
		\item[(i)] $G\setminus\{i\}$ and $G\setminus N_G[i]$ are vertex decomposable.
		\item[(ii)] Any maximal independent set of $G\setminus\{i\}$ is a maximal independent set of $G$.
	\end{enumerate}
	
	It can be easily seen that (ii) is equivalent to say that no independent set of $G\setminus N_G[i]$ is a maximal independent set of $G\setminus\{i\}$. Such a vertex $i$ is called a {\em shedding vertex} of $G$.
	
	Let $\sigma=(k_1,\ldots,k_n)$ be a permutation of $[n]$ that is $\sigma(i)=k_i$ for all $i$. The \emph{permutation graph} $G(\sigma)$ corresponding to $\sigma$ is the graph on the vertex set $[n]$ for which $\{i,j\}\in E(G(\sigma))$ if and only if $i<j$ and $j$ appears before $i$ in the list $k_1,\ldots,k_n$. For example if $\sigma=(2,4,5,1,3)$, then $G(\sigma)$ is the graph on $[5]$ with the edge set $\{\{1,2\},\{1,4\},\{3,4\},\{1,5\},\{3,5\}\}$.
	
	For a poset $(P,\prec)$ with the vertex set $V=\{v_1,\dots,v_n\}$, the \textit{comparability graph} $\textup{comp}(P)$ of $P$ is defined to be the graph on $V$ with $\{v_i,v_j\}\in E(\textup{comp}(P))$ if and only if $v_i$ and $v_j$ are comparable in $P$ and $v_i\ne v_j$. Whereas, the \textit{co-comparability graph} $\textup{co-comp}(P)$ of $P$ is defined to be the graph on $V$ with $\{v_i,v_j\}\in E(\textup{co-comp}(P))$ if and only if $v_i$ and $v_j$ are not comparable in $P$. In other words, $\textup{co-comp}(P)$ is the complementary graph of $\textup{comp}(P)$. By \cite[Theorem 3]{PLE}, a graph $G$ is a permutation graph if and only if $G$ is both a comparability and a co-comparability graph.
	
	For two elements $x,y$ in a poset $(P,\prec)$, we write $x\precdot y$, whenever $x\prec y$ and if $z$ is an element in $P$ with $x\preceq z\preceq y$, then $z=x$ or $z=y$.
	
	A graph $G$ is called {\em weakly chordal} if $G$ and $G^c$ have no induced cycles of length $m\ge 5$. A cycle in $G$ is called an {\em induced cycle} if no two non-consecutive vertices in the cycle are adjacent in 
	$G$.
	
	We denote by $K_n$ the complete graph on $n$ vertices and by $P_n$ the path graph on $n$ vertices. Notice that $K_n$ and $P_n^c$ are permutation graphs.
	
	\section{Cohen-Macaulay and Gorenstein permutation graphs}
	
	Permutation graphs are characterized in terms of the existence of a so-called cohesive order on their vertex sets. 
	A graph $G$ is said to have a {\em cohesive order} if there is a labeling $[n]$ on $V(G)$ such that
	\begin{itemize}
		\item [(i)] If $i<j<k$ and $\{i,j\}\in E(G)$, $\{j,k\}\in E(G)$, then $\{i,k\}\in E(G)$. 
		\item [(ii)] If $i<j<k$ and $\{i,k\}\in E(G)$, then  $\{i,j\}\in E(G)$ or $\{j,k\}\in E(G)$. 
	\end{itemize}

	\begin{Theorem}\cite[Theorem 2.3]{GRR}\label{cohesive}
		A graph $G$ is a permutation graph if and only if it has a cohesive order.
	\end{Theorem}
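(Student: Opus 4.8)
The plan is to prove both implications directly from the definition of a permutation graph, with an alternative for the converse routed through the comparability/co-comparability characterization recalled above.

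\emph{Permutation graph $\Rightarrow$ cohesive order.} Suppose $G=G(\sigma)$ for a permutation $\sigma=(k_1,\dots,k_n)$ of $[n]$. I claim the identity labeling $1<2<\dots<n$ of $V(G)=[n]$ is cohesive. Introduce the total order $\prec_L$ on $[n]$ given by $a\prec_L b$ iff $a$ occurs before $b$ in the list $k_1,\dots,k_n$; by definition, for $i<j$ one has $\{i,j\}\in E(G)$ iff $j\prec_L i$. If $i<j<k$ and $\{i,j\},\{j,k\}\in E(G)$, then $k\prec_L j\prec_L i$, so $k\prec_L i$ and $\{i,k\}\in E(G)$; this is (i). If $i<j<k$ and $\{i,k\}\in E(G)$, then $k\prec_L i$; comparing $j$ with $i$ in $\prec_L$, either $j\prec_L i$, whence $\{i,j\}\in E(G)$, or $i\prec_L j$, whence $k\prec_L i\prec_L j$ gives $k\prec_L j$ and $\{j,k\}\in E(G)$; this is (ii).

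\emph{Cohesive order $\Rightarrow$ permutation graph.} Assume $G$ carries a cohesive order, taken to be $1<2<\dots<n$. Define a relation $\prec_L$ on $[n]$ by setting, for each pair $i<j$, $i\prec_L j$ when $\{i,j\}\notin E(G)$ and $j\prec_L i$ when $\{i,j\}\in E(G)$. This is a tournament (a complete antisymmetric relation), and a tournament is a linear order exactly when it has no directed $3$-cycle. Suppose $p<q<r$ formed one; there are two cyclic orientations. If $p\prec_L q\prec_L r\prec_L p$, then $\{p,q\}\notin E(G)$, $\{q,r\}\notin E(G)$ and $\{p,r\}\in E(G)$, contradicting condition (ii). If $p\prec_L r\prec_L q\prec_L p$, then $\{p,r\}\notin E(G)$ while $\{p,q\}\in E(G)$ and $\{q,r\}\in E(G)$, contradicting condition (i). Hence $\prec_L$ is a linear order; listing the elements of $[n]$ in increasing $\prec_L$-order as $k_1,\dots,k_n$ defines a permutation $\sigma$ with $\sigma(i)=k_i$. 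For $i<j$ one then has $\{i,j\}\in E(G)$ iff $j\prec_L i$ iff $j$ precedes $i$ in $k_1,\dots,k_n$ iff $\{i,j\}\in E(G(\sigma))$, so $G=G(\sigma)$.

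Alternatively, the converse follows from \cite[Theorem 3]{PLE}: for the cohesive order $[n]$, put $i\prec_P j$ iff $i<j$ and $\{i,j\}\in E(G)$, and $i\prec_Q j$ iff $i<j$ and $\{i,j\}\notin E(G)$. Condition (i) is precisely transitivity of $\prec_P$ and condition (ii) is precisely transitivity of $\prec_Q$, so $P$ and $Q$ are posets on $[n]$; one checks at once that $\comp(P)=G$ and $\comp(Q)=G^c$, so $G$ is simultaneously a comparability graph and a co-comparability graph, hence a permutation graph. There is no serious obstacle anywhere: the whole argument collapses to the two short ``no directed $3$-cycle'' (equivalently, transitivity) verifications, each of which is nothing but one of the two cohesiveness axioms read off after sorting the three vertices by label. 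The only point requiring care is consistently tracking the relative label order of the vertices and the direction of the ``appears before'' relation when passing between $\prec_L$, the posets $P$ and $Q$, and membership in $E(G)$.
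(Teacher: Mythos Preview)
The paper does not supply its own proof of this theorem; it is quoted verbatim as \cite[Theorem 2.3]{GRR} and used as a black box. So there is nothing in the paper to compare your argument against.

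That said, your proof is correct in both directions. The forward direction is immediate from the definition, and for the converse your tournament construction is exactly the right move: the two cohesiveness axioms are precisely what rule out the two possible directed $3$-cycles on a triple $p<q<r$, so $\prec_L$ is a genuine linear order and reading it off gives the desired permutation. Your alternative route via \cite[Theorem~3]{PLE} is also sound and is in fact closer in spirit to how the paper itself uses the cohesive order later (in the proof of Theorem~\ref{vd} it immediately builds the poset $P$ with $i\prec j$ iff $i<j$ and $\{i,j\}\in E(G)$, relying on axiom~(i) for transitivity).
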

	
	The following characterization of Cohen-Macaulay permutation graphs by Cheri, et al.~\cite{CDKKV} will be extensively used in the proof of Theorem~\ref{vd}. 
	
	\begin{Theorem}\cite[Theorem 1.1]{CDKKV}\label{CM} Let $G$ be a permutation graph. Then the following statements are equivalent.
		\begin{enumerate}
			\item [(i)] $G$ is Cohen-Macaulay.
			\item [(ii)] $G$ is unmixed and there exists a unique way of partitioning $V(G)$ into $r$ disjoint maximal cliques, where $r$ is the cardinality of a maximal independent set of $G$. 
		\end{enumerate}
	\end{Theorem}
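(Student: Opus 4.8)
The plan is to pass to the Stanley--Reisner picture and apply Reisner's criterion, exploiting the fact that every induced subgraph of a permutation graph is again a permutation graph. Since $I(G)$ is the Stanley--Reisner ideal of the independence complex $\Delta_G$, the graph $G$ is Cohen--Macaulay if and only if $\Delta_G$ is a Cohen--Macaulay complex over $K$. The necessity of unmixedness in (ii) is then immediate: a Cohen--Macaulay complex is pure, so all facets of $\Delta_G$ -- that is, all maximal independent sets of $G$ -- have the same cardinality $r=\alpha(G)$, which is exactly the statement that $G$ is unmixed. Throughout I would fix a cohesive order on $V(G)$ via Theorem~\ref{cohesive} and read $G$ as the comparability graph of a poset $P$, so that cliques are chains, independent sets are antichains, and $r$ is the width of $P$. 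By Dilworth's theorem $V(G)$ can always be covered by $r$ chains, so the genuine content of (ii) is that such a covering can be realized by \emph{maximal} cliques and that it is \emph{unique}.

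The main reduction is the following. For a face $F\in\Delta_G$ one has $\lk_{\Delta_G}(F)=\Delta_{G\setminus N_G[F]}$, and the induced subgraph $G\setminus N_G[F]$ is again a permutation graph. Reisner's criterion then says that $G$ is Cohen--Macaulay if and only if, for every independent set $F$ (including $F=\emptyset$), one has $\tilde{H}_i(\Delta_{G\setminus N_G[F]};K)=0$ for all $i<\dim\Delta_{G\setminus N_G[F]}$. This sets up an induction on the number of vertices: assuming the statement for all proper induced subgraphs, it remains only to decide when $\tilde{H}_i(\Delta_G;K)$ vanishes below the top dimension, and to match this vanishing with the combinatorics of partitions into maximal cliques.

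The bridge between homology and clique partitions is the heart of the matter, and the guiding example is $C_4$: there $G$ is unmixed but admits two distinct partitions into maximal cliques, and correspondingly $\Delta_{C_4}$ is disconnected, so $\tilde{H}_0\ne 0$ and $G$ fails to be Cohen--Macaulay. In general I would show that two distinct partitions of $V(G)$ into $r$ maximal cliques force an analogous alternating four-vertex configuration (read off from the cohesive order) whose presence creates a nontrivial reduced cycle in some $\Delta_{G\setminus N_G[F]}$ below top dimension; this yields (i)$\Rightarrow$(ii), the existence of a maximal-clique partition being extracted from a Dilworth chain decomposition using purity. For the converse (ii)$\Rightarrow$(i), I would use the unique partition $V(G)=Q_1\sqcup\cdots\sqcup Q_r$ to locate a vertex $v$ for which both $G\setminus\{v\}$ and $G\setminus N_G[v]$ still satisfy the hypotheses of (ii) within the class of permutation graphs, apply the inductive hypothesis to these smaller graphs, and assemble the vanishing of $\tilde{H}_\bullet(\Delta_G;K)$ from the Mayer--Vietoris sequence relating $\del_{\Delta_G}(v)=\Delta_{G\setminus\{v\}}$, the cone on $\lk_{\Delta_G}(v)=\Delta_{G\setminus N_G[v]}$, and their intersection.

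The step I expect to be the main obstacle is precisely this bridge -- proving the exact equivalence between the homological conditions supplied by Reisner's criterion and the purely combinatorial statement ``unmixed, with a unique partition into $r$ maximal cliques.'' Showing that non-uniqueness produces a specific nonzero class in some $\tilde{H}_i$ requires isolating, through the cohesive order, the minimal configuration responsible for two competing clique partitions and verifying that it is homologically visible; conversely, showing that uniqueness together with unmixedness forces all these homology groups to vanish requires an induction whose hypotheses are preserved under passage to $G\setminus\{v\}$ and $G\setminus N_G[v]$. In other words, one must check that uniqueness of the maximal-clique partition is inherited by the relevant induced subgraphs, and this is the delicate point at which the special structure of permutation graphs -- not merely their perfectness -- is essential.
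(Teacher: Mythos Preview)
This theorem is not proved in the present paper. It is quoted verbatim from \cite[Theorem~1.1]{CDKKV} and used as a black box in the proof of Theorem~\ref{vd}; the paper offers no argument of its own for it. So there is no ``paper's proof'' to compare your proposal against.

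That said, your proposal is a plan rather than a proof: you correctly identify Reisner's criterion and the poset/comparability-graph viewpoint as the right setting, and you correctly anticipate that the crux is the two-way bridge between non-uniqueness of the maximal-clique partition and a nonvanishing reduced homology class in some link. But both directions of that bridge are left as intentions. In particular, you do not actually produce the ``alternating four-vertex configuration'' from two distinct partitions, nor do you verify that uniqueness of the partition is inherited by $G\setminus\{v\}$ and $G\setminus N_G[v]$ for a suitably chosen $v$---and these are exactly the nontrivial steps. (The paper's own argument for Theorem~\ref{vd} does carry out the second of these, proving a claim that locates a vertex $j_t$ with $\{s:i_t\prec s\}=\{j_t\}$ and then checking that the unique partition descends to $G\setminus\{j_t\}$; you could adapt that machinery.) As written, your text is an outline with the hard lemmas still open, not a proof.
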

	
	Now, we are in the position to prove
	
	\begin{Theorem}\label{vd}
		Let $G$ be a permutation graph. The following statements are equivalent.
		\begin{enumerate}
			\item [(i)] $G$ is Cohen-Macaulay.
			\item [(ii)] $G$ is unmixed and vertex decomposable.
			\item [(iii)] $G$ is unmixed and shellable.
		\end{enumerate}
	\end{Theorem}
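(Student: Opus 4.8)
The implications (ii) $\Rightarrow$ (iii) $\Rightarrow$ (i) hold for every graph and use nothing special about permutation graphs: a vertex decomposable simplicial complex is shellable \cite{PB}, a pure shellable complex is Cohen-Macaulay (over any field), and $\Delta_G$ is pure exactly when $G$ is unmixed; moreover if $S/I(G)$ is Cohen-Macaulay then $\Delta_G$ is automatically pure, so (i) already subsumes unmixedness. Hence the whole content of the theorem is the implication (i) $\Rightarrow$ (ii): a Cohen-Macaulay permutation graph is vertex decomposable, and I would prove this by induction on $|V(G)|$ (the cases $|V(G)|\le 1$ being trivial).

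So let $G$ be Cohen-Macaulay. By Theorem~\ref{CM}, $G$ is unmixed and $V(G)=Q_1\sqcup\cdots\sqcup Q_r$ is the \emph{unique} partition of $V(G)$ into maximal cliques, with $r=\alpha(G)$. Deleting all isolated vertices changes neither Cohen-Macaulayness nor vertex decomposability (the independence complex only gets coned off repeatedly, and a cone is vertex decomposable iff its base is), so we may assume $G$ has no isolated vertex. The crux of the argument, which I expect to be the main obstacle, is the claim that \emph{a Cohen-Macaulay permutation graph without isolated vertices has a simplicial vertex}, i.e.\ a vertex $v$ with $N_G(v)$ a clique. This must genuinely use both tools at our disposal: being a permutation graph does not suffice ($C_4$ is an unmixed permutation graph with no simplicial vertex), so one has to exploit the uniqueness clause of Theorem~\ref{CM}, presumably together with a cohesive order from Theorem~\ref{cohesive}, in order to exclude $C_4$-type local configurations.

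Granting the claim, fix a simplicial vertex $v$ and let $Q_1$ be the clique of the decomposition containing $v$. Since $v$ is simplicial, $Q_1=N_G[v]$ is the unique maximal clique through $v$, and $|Q_1|\ge 2$ because $v$ is not isolated; pick any $u\in Q_1\setminus\{v\}$. As $N_G[v]$ is a clique containing $u$ we have $N_G[v]\subseteq N_G[u]$, and from this $u$ is a shedding vertex of $G$: if an independent set $F\subseteq V(G)\setminus N_G[u]$ were a maximal independent set of $G\setminus\{u\}$, then $F\cup\{v\}$ would still be independent in $G\setminus\{u\}$ (no vertex of $F$ can be adjacent to $v$, as it would then lie in $N_G[v]\subseteq N_G[u]$), contradicting maximality.

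It remains to see that $G\setminus\{u\}$ and $G\setminus N_G[u]$ are vertex decomposable; both are permutation graphs, being induced subgraphs of $G$. First, $\Delta_{G\setminus N_G[u]}=\lk_{\Delta_G}(\{u\})$ is a link in the Cohen-Macaulay complex $\Delta_G$, hence Cohen-Macaulay, so $G\setminus N_G[u]$ is a Cohen-Macaulay permutation graph on fewer vertices and is vertex decomposable by induction. Second, since $u$ is a shedding vertex of the pure complex $\Delta_G$, the complex $\del_{\Delta_G}(\{u\})=\Delta_{G\setminus\{u\}}$ is pure of dimension $r-1$, so $G\setminus\{u\}$ is unmixed with $\alpha(G\setminus\{u\})=r$; and $(Q_1\setminus\{u\})\sqcup Q_2\sqcup\cdots\sqcup Q_r$ is the unique partition of $V(G)\setminus\{u\}$ into $r$ maximal cliques. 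Indeed $Q_1\setminus\{u\}$ is still maximal (a vertex outside $Q_1$ adjacent to all of it would be adjacent to $v$, hence would lie in $N_G(v)=Q_1\setminus\{v\}$), $v$ is still simplicial in $G\setminus\{u\}$ so it sits in a unique maximal clique there, namely $Q_1\setminus\{u\}$, and any maximal-clique partition of $V(G)\setminus\{u\}$ must then restrict on $V(G)\setminus Q_1$ to a maximal-clique partition of the Cohen-Macaulay permutation graph $G\setminus N_G[v]$, which is unique by Theorem~\ref{CM}. Thus $G\setminus\{u\}$ is Cohen-Macaulay by Theorem~\ref{CM}, hence vertex decomposable by induction. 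As $u$ is a shedding vertex and $G\setminus\{u\}$, $G\setminus N_G[u]$ are both vertex decomposable, $G$ is vertex decomposable, completing the induction.
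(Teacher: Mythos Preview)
Your overall architecture—find a ``dominated'' vertex, use a neighbour that dominates it as a shedding vertex, then verify that both $G\setminus\{u\}$ and $G\setminus N_G[u]$ are again Cohen--Macaulay permutation graphs and induct—matches the paper's strategy exactly, and your treatment of (ii)$\Rightarrow$(iii)$\Rightarrow$(i) and of $G\setminus N_G[u]$ via links is fine. The problem is the key claim you leave open: \emph{a Cohen--Macaulay permutation graph without isolated vertices has a simplicial vertex}. This claim is false. Take $G=P_6^c$, the complement of the path $1\text{--}2\text{--}3\text{--}4\text{--}5\text{--}6$. This is a Cohen--Macaulay permutation graph (it is unmixed with $\alpha(G)=2$, and $\{1,3,5\}\sqcup\{2,4,6\}$ is the unique partition into maximal cliques, so Theorem~\ref{CM} applies; the paper itself records that $P_n^c$ is Cohen--Macaulay). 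Yet for every vertex $v$ the set $N_G(v)$ contains two consecutive vertices of $P_6$, which are non-adjacent in $G$, so no vertex is simplicial. Hence the route through simpliciality cannot work, and with it your clean argument for the uniqueness of the clique partition of $G\setminus\{u\}$ (which relied on $v$ lying in a \emph{unique} maximal clique) collapses.

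What the paper actually proves is strictly weaker than simpliciality but strong enough for the induction. Passing to a cohesive order one views $G$ as the comparability graph of a poset $P$, so that maximal cliques are maximal chains; writing $j_k=\max A_k$ and $i_k\precdot j_k$, the paper shows (this is the substantial ``Claim'') that for some $t$ the element $i_t$ has $j_t$ as its \emph{only} upper cover in $P$, i.e.\ $\{s:i_t\prec s\}=\{j_t\}$. This yields $N_G[i_t]\subseteq N_G[j_t]$, hence $j_t$ is a shedding vertex, but $i_t$ need not be simplicial (in $P_6^c$ above, $i_t$ has two incomparable elements below it). The uniqueness of the partition for $G\setminus\{j_t\}$ is then argued directly from the chain structure: any partition of $G\setminus\{j_t\}$ into maximal chains must place $i_t$ at the top of one chain, and adjoining $j_t$ to that chain recovers a partition of $G$, whose uniqueness forces the conclusion. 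To repair your proof you should replace ``simplicial vertex'' by this poset-theoretic domination statement and redo the uniqueness step for $G\setminus\{u\}$ along these lines.
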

	
	\begin{proof}
		(i) $\Rightarrow$  (ii): Without loss of generality we assume that $G$ has no isolated vertices. By Theorem~\ref{cohesive}, we may assume that $V(G)=[n]$ is a cohesive order of $G$ in the given labeling. 
		For $i,j\in [n]$, we set $i\prec j$, if $i<j$ and $\{i,j\}\in E(G)$. It follows from the property (i) of cohesive order that ($V(G),\prec)$ is a poset. We denote this poset by $P$. Then it is clear that $G=\comp(P)$, and the maximal cliques of $G$ are just the maximal chains in $P$.
		
		Since $G$ is Cohen-Macaulay, it is unmixed. We may assume that the cardinality of any maximal independent set of $G$ is $r$.  By Theorem~\ref{CM}, $V(G)$ is partitioned in a unique way into $r$ disjoint maximal chains in $P$. Let $V(G)=A_1\cup\cdots \cup A_r$  be this unique partition. Let $j_k=\max A_k$ and $i_k\in A_k$ be the unique element in $A_k$ with $i_k\precdot j_k$ for $1\leq k\leq r$.  
		
		\textbf{Claim}. There exists an integer $1\leq t\leq r$, such that $$\{s\in [n]:\ i_t\prec s\}=\{j_t\}.$$
		
		\textbf{Proof of the claim}. Suppose on the contrary that this is not the case. Then for any $1\leq k\leq r$, there exists $1\leq \ell_k\leq r$ with $\ell_k\neq k$ such that  $i_k\prec j_{\ell_k}$. Without loss of generality we may assume that $i_1\prec j_2$. 
		
		First suppose that $i_2\prec j_1$. We show that $B_1=(A_1\setminus \{j_1\})\cup\{j_2\}$ and $B_2=(A_2\setminus \{j_2\})\cup\{j_1\}$ are maximal chains of $P$. Since $i_1\prec j_2$ and $i_2\prec j_1$,  $B_1$ and $B_2$ are chains of $P$. Suppose that $B_1$ is not a maximal chain of $P$. This would mean that there exists $s\in [n]$ with $i_1\prec s\prec j_2$. Let $F$ be a maximal independent set of $G$ which contains $s$. Since $|F|=r$, $F$ contains precisely one element from each $A_k$. Since $i_1\prec s$ and $s\in F$, it follows that $F\cap A_1=\{j_1\}$. Now, from $j_1\in F$ and  $i_2\prec j_1$, it follows that $F\cap A_2=\{j_2\}$.
		Thus $s,j_2\in F$. This contradicts to $s\prec j_2$. Therefore, $B_1$ is a maximal chain of $P$. The same argument shows that $B_2$ is a maximal chain of $P$. Then $V(G)=B_1\cup B_2\cup A_3\cup\cdots\cup A_r$ is another partition of $V(G)$ into maximal chains of $P$ and this contradicts to Theorem~\ref{CM}.

		So we have $i_2\nprec j_1$. Then by our assumption we may assume that $i_2\prec j_3$. Similar to the argument in the previous paragraph, if $i_3\prec j_1$, then $V(G)=B_1\cup B_2\cup B_3\cup A_4\cup\cdots\cup A_r$ is another partition of $V(G)$ into maximal chains of $P$, where   $B_1=(A_1\setminus \{j_1\})\cup\{j_2\}$, $B_2=(A_2\setminus \{j_2\})\cup\{j_3\}$ and $B_3=(A_3\setminus \{j_3\})\cup\{j_1\}$, and this again contradicts to Theorem~\ref{CM}. Moreover, if $i_3\prec j_2$, then $V(G)=A_1\cup B_2\cup B_3\cup A_4\cup\cdots\cup A_r$ is another partition of $V(G)$ into maximal chains of $P$, where     $B_2=(A_2\setminus \{j_2\})\cup\{j_3\}$ and $B_3=(A_3\setminus \{j_3\})\cup\{j_2\}$, which is absurd. Hence, we may assume $i_3\prec j_4$.
		
		Proceeding with the same argument, and after relabeling we obtain that $i_k\prec j_{k+1}$ for $1\leq k\leq r-1$. Our assumption implies that $i_r\prec j_s$ for some $s<r$. We set $B_k=(A_k\setminus \{j_k\})\cup\{j_{k+1}\}$ for $s\leq k\leq r-1$ and $B_r=(A_r\setminus \{j_r\})\cup\{j_s\}$. We show that each $B_k$ is a maximal chain of $P$. To simplify the notation, we set $j_{r+1}=j_s$. Fix an integer $s\leq q\leq r$ and suppose that $B_q$ is not a maximal chain of $P$. Then $i_q\prec m\prec j_{q+1}$ for some $m$. We let $F$ be a maximal independent set of $G$ which contains $m$. Since $F$ contains precisely one element from each of $A_1,\ldots,A_r$, from $i_q\prec m$, we obtain $F\cap A_q=\{j_q\}$. Since $i_{q-1}\prec j_q$ and $j_q\in F$, we get $F\cap A_{q-1}=\{j_{q-1}\}$. Similarly, the relations $i_k\prec j_{k+1}$, imply that  $F\cap A_{k}=\{j_{k}\}$ for all $s\leq k\leq r$. Therefore, $m,j_{q+1}\in F$. This contradicts to $m\prec j_{q+1}$. Thus we have proved that each $B_k$ is a maximal chain of $P$. Having this, we obtain the partition $(\bigcup_{i=1}^{s-1} A_i)\cup (\bigcup_{i=s}^r B_i)$ of $V(G)$ into maximal chains of $P$, which is different from $\bigcup_{i=1}^r A_i$. This contradicts to Theorem~\ref{CM}. So our claim is proved.\hfill$\square$ 
		
		Let $t$ be an integer satisfying the claim. Without loss of generality we let $t=1$. We prove that $j_1$ is a shedding vertex of $G$ so that $G'=G\setminus \{j_1\}$ and $G''=G\setminus N_G[j_1]$ are vertex decomposable. This will show that $G$ is vertex decomposable.
		Since any induced subgraph of a permutation graph is a permutaion graph, $G'$ and $G''$ are permutation graphs. We show that they are Cohen-Macaulay. Then by induction on the number of vertices of the graph, it follows that they are vertex decomposable. By~\cite[Proposition 4.3]{Vi}, $G''$ is Cohen-Macaulay. 
		
		To show that $G'$ is Cohen-Macaulay, we set $P'$ to be the poset obtaining from $P$ by removing $j_1$. Then $P'=\comp(G')$.   
		By our assumption on $i_1$, the chain $A'_1=A_1\setminus \{j_1\}$ is a maximal chain of $P'$. Thus $V(G')=A'_1\cup A_2\cup\cdots\cup A_r$ is a partition of $V(G')$ into maximal chains of $P'$. We show that $V(G')$ is uniquely partitioned into maximal chains of $P'$. Let $V(G')=C_1\cup C_2\cup \cdots\cup C_p$ be an arbitrary partition of $V(G')$ into maximal chains $C_k$ of $P'$. First notice that the set consisting of the maximal elements of $C_1,\ldots,C_p$ is an independent set of $G'$ and hence an independent set of $G$ of cardinality $p$. Therefore, $p\leq r$. 
		Moreover, by the assumption on $i_1$, we know that $i_1$ is a maximal element of $P'$. So it is the maximal element of some maximal chain, let say $C_1$. Then $C_1\cup\{j_1\}$ is a maximal chain of $P$, since $i_1\precdot j_1$.
		We show that each $C_k$ for $2\leq k\leq p$ is a maximal chain of $P$. Suppose this is not the case. Then $C_h\cup\{j_1\}$ is a chain in $P$ for some $2\leq h\leq p$. Now, consider a maximal independent set $F$ of $G$ with $j_1\in F$. Since $F$ has at most one element from each chain of $P$ and $C_1\cup\{j_1\}$ and $C_h\cup\{j_1\}$ are chains in $P$, we conclude that $F\cap C_1=F\cap C_h=\emptyset$ and $|F\cap C_{\ell}|\leq 1$ for all $\ell$. Thus 
		$$r-1=|F\setminus\{j_1\}|= \sum_{\ell=1}^p|F\cap C_{\ell}|\leq p-2.$$
		
		This contradicts to $p\leq r$. So $C_2,\ldots,C_p$ are maximal chains of $P$, and hence $V(G)=(C_1\cup\{j_1\})\cup C_2\cup\cdots\cup C_p$ is a partition of $V(G)$ into maximal chains of $P$. Since such a partition is unique, we obtain $p=r$, $A_1=C_1\cup\{j_1\}$ and, after a suitable relabeling, $A_k=C_k$ for $2\leq k\leq r$. So $V(G')$ is uniquely partitioned into maximal chains as $V(G')=A'_1\cup A_2\cup\cdots\cup A_r$. 
		
		Next, we show that any maximal independent set of $G'$ is a maximal independent set of $G$. This will show that $j_1$ is a shedding vertex of $G$ and that $G'$ is unmixed. Consider a maximal independent set $F$ of $G'$.  If $i_1\in F$, then  $F\cup\{j_1\}$ is not an independent set of $G$. In other words, $F$ is a maximal independent set of $G$.
		
		Now, assume that $i_1\notin F$. Since $F$ is maximal, this means that $b\in F$ for some $b\in N_{G'}(i_1)$. Otherwise, $F\cup\{i_1\}$ would be an independent set of $G'$ which strictly contains $F$. By our assumption on $i_1$, we have $N_{G}[i_1]\subseteq N_{G}[j_1]$. Indeed, it follows from the equality $\{s\in [n]:\ i_1\prec s\}=\{j_1\}$ that  if $\{s,i_1\}\in E(G)$ with $s\neq j_1$, then we have $s\prec i_1$. So $s\prec i_1\prec j_1$, and hence $\{s,j_1\}\in E(G)$. So $N_{G}[i_1]\subseteq N_{G}[j_1]$.  Therefore, $b\in N_{G}[j_1]$. Since $b\neq j_1$ we have $b\in N_{G}(j_1)\cap F$.  This shows that $F\cup\{j_1\}$ is not an independent set of $G$. So $F$ is a maximal independent set of $G$. 
		This implies that $G'$ is unmixed and $j_1$ is a shedding vertex of $G$.
		Now, by Theorem~\ref{CM}, we conclude that $G'$ is Cohen-Macaulay. So by induction, $G'$ is vertex decomposable. The proof is complete.   
		
		(ii) $\Rightarrow$  (iii) and 	(iii) $\Rightarrow$  (i) follow from~\cite[Theorem 11.3]{BW} and~\cite[Theorem 8.2.6]{HH}, respectively. 
	\end{proof}

	\begin{Remark}
		A permutation graph is not necessarily vertex decomposable. Indeed, the graph $G$ depicted below is an unmixed permutation graph, which is not vertex decomposable. It follows from Theorem~\ref{vd} that $G$ is not Cohen-Macaulay. 
		
		\medskip
		
		\begin{center}
			\begin{tikzpicture}[scale=0.25]\label{Pic1}
				\draw[-] (6.,6.)--(9.,3.);
				\draw[-] (6.,6.)--(3.,3.);
				\draw[-] (9.,3.)--(9.,-1.);
				\draw[-] (3.,-1.)--(3.,3.);
				\draw[-] (9.,-1.)--(3.,-1.);
				\draw[-] (9.,3.)--(3.,3.);
				\draw[-] (6.,6.)--(3.,-1.);
				\filldraw[black] (6,6) circle (5pt); %node[above]{{$x_1$}};
				
				\filldraw (9,3) circle (5pt); %node[right]{{$x_2$}};
				\filldraw (9,-1) circle (5pt); %node[right]{{$x_3$}};
				\filldraw (3,-1) circle (5pt); %node[left]{{$x_5$}};
				\filldraw (3,3) circle (5pt); %node[left]{{$x_6$}};
			\end{tikzpicture}
		\end{center}
	\end{Remark}
	
	Vertex splittable ideals were defined in~\cite{MK}. They appear as the Alexander duals of the Stanley-Reisner ideals of vertex decomposable simplicial complexes. A monomial ideal $I\subset S$ is called  {\em vertex splittable} if it can be obtained by the following recursive procedure.
	\begin{itemize}
		\item[(i)] If $u$ is a monomial and $I=(u)$, $I=(0)$ or $I=S$, then $I$ is vertex splittable.
		\item[(ii)] If there is a variable $x_i$ and vertex splittable ideals $I_1$ and $I_2$ of $K[X\setminus\{{x}_{i}\}]$ so that $I=x_iI_1+I_2$, $I_2\subseteq I_1$ and $\mathcal{G}(I)$ is the disjoint union of $\mathcal{G}(x_iI_1)$ and $\mathcal{G}(I_2)$, then $I$ is vertex splittable.
	\end{itemize}
	
	The {\em cover ideal} $J(G)$ of a graph $G$ is defined as the monomial ideal generated by those monomials whose support is a vertex cover of $G$.
	
	The following corollary in obtained from Theorem~\ref{vd} and \cite[Theorem 2.2]{MK}.

	\begin{Corollary}\label{Cor:vs}
		Let $G$ be an unmixed permutation graph, and let $J(G)$ be the cover ideal of $G$. 
		The following are equivalent:
		
		\begin{enumerate}
			\item [(i)]  $J(G)$ is vertex splittable.
			\item [(ii)] $J(G)$ has linear quotients.
			\item [(iii)] $G$ is Cohen-Macaulay.
		\end{enumerate}
	\end{Corollary}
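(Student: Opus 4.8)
The plan is to push everything to the independence complex $\Delta_G$ and then feed it into Theorem~\ref{vd} together with the basic properties of vertex splittable ideals. Recall the standard facts that $I(G)=I_{\Delta_G}$ is the Stanley--Reisner ideal of $\Delta_G$ and that the cover ideal is its Alexander dual, so that $J(G)=I(G)^\vee=I_{\Delta_G^\vee}$. By \cite[Theorem 2.2]{MK}, for any simplicial complex $\Delta$ the ideal $I_{\Delta^\vee}$ is vertex splittable if and only if $\Delta$ is vertex decomposable, and moreover every vertex splittable ideal has linear quotients. Applying this to $\Delta=\Delta_G$ gives at once that $J(G)$ is vertex splittable if and only if $G$ is vertex decomposable, and it also yields the implication (i) $\Rightarrow$ (ii) for free.

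Next I would establish (i) $\Leftrightarrow$ (iii). By the previous paragraph, $J(G)$ is vertex splittable exactly when $G$ is vertex decomposable. Since $G$ is assumed unmixed, ``$G$ is unmixed and vertex decomposable'' is precisely condition (ii) of Theorem~\ref{vd}, which that theorem tells us is equivalent to $G$ being Cohen--Macaulay. Chaining these equivalences gives (i) $\Leftrightarrow$ (iii).

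It remains to prove (ii) $\Rightarrow$ (iii), and this is the only step where the unmixedness hypothesis genuinely does work. Because $G$ is unmixed, every minimal vertex cover of $G$ has cardinality $\tau(G)=n-\alpha(G)$, so $J(G)$ is generated in the single degree $\tau(G)$; a monomial ideal generated in one degree with linear quotients has a linear resolution, hence $J(G)=I_{\Delta_G^\vee}$ has a linear resolution. By the Eagon--Reiner theorem this is equivalent to $(\Delta_G^\vee)^\vee=\Delta_G$ being Cohen--Macaulay, i.e.\ to $G$ being Cohen--Macaulay, which closes the cycle. I do not expect a real obstacle here: once Theorem~\ref{vd} and \cite[Theorem 2.2]{MK} are in hand the argument is pure bookkeeping with Alexander duality. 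The one point to keep straight is that unmixedness is used in two places for essentially the same reason, namely to guarantee that $J(G)$ is generated in a single degree, which is what upgrades ``linear quotients'' to ``linear resolution'' in (ii) $\Rightarrow$ (iii) and what makes ``vertex decomposable'' synonymous with ``Cohen--Macaulay'' via Theorem~\ref{vd}.
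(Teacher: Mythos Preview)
Your argument is correct and follows the same route as the paper, which simply appeals to Theorem~\ref{vd} together with \cite[Theorem 2.2]{MK}; you have merely unpacked what those two citations contain. The only cosmetic difference is that for (ii) $\Rightarrow$ (iii) you go through Eagon--Reiner (equigenerated plus linear quotients gives a linear resolution, hence $\Delta_G$ is Cohen--Macaulay), whereas the paper's citation of Theorem~\ref{vd} implicitly uses the equivalence ``$J(G)$ has linear quotients $\Leftrightarrow$ $\Delta_G$ is shellable'' to land on condition (iii) of that theorem---both are standard and equally short.
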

	
	We expect that for a Cohen-Macaulay permutation graph $G$, all powers of the cover ideal $J(G)$ have linear resolution.

	\begin{Corollary}\label{Rees}
		Let $G$ be a Cohen-Macaulay permutation graph. Then
		\begin{enumerate}
			\item [(a)] The Rees algebra $\mathcal{R}(J(G))$ and the toric algebra $K[J(G)]$ are normal Cohen-Macaulay domains 
			\item [(b)]  $J(G)$ satisfies the strong persistence property.
			\item [(c)] $\lim_{k\to\infty} \depth S/J(G)^k=n-\ell(J(G))$.
			\item [(d)] $\reg(K[I(G)])\leq \textup{m}(G)$.
			\item [(e)] $\textup{m}(G)\leq \reg(\mathcal{R}(I(G)))\leq \textup{m}(G)+1$. 
			
		\end{enumerate}
	\end{Corollary}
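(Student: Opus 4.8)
The plan is to derive all five assertions from two inputs: first, by Corollary~\ref{Cor:vs}, the cover ideal $J(G)$ is vertex splittable, and it is equigenerated in degree $\tau(G)$ because $G$ is unmixed; second, as recalled in the introduction, permutation graphs are perfect graphs (being comparability graphs, in fact weakly chordal).

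The main point is part (a): the goal is to show that the Rees algebra $\mathcal{R}(J(G))$ and the toric algebra $K[J(G)]$ are normal domains. The generators of $J(G)$ are the monomials supported on the minimal vertex covers of $G$, so $J(G)$ is the edge ideal of the clutter of minimal vertex covers. I would establish normality using that $G$ is perfect: for a perfect graph the blowup algebras of the cover ideal are known to be normal, via the polyhedral theory of covering and packing developed (among others) by Escobar, Gitler and Villarreal. Once $\mathcal{R}(J(G))$ and $K[J(G)]$ are known to be affine semigroup rings which are normal, Hochster's theorem gives that they are Cohen--Macaulay, which proves (a). (Alternatively, one could attempt a direct induction along the vertex-splitting of $J(G)$ dictated by the unique clique partition of Theorem~\ref{CM}, splitting off the shedding variable just as in the proof of Theorem~\ref{vd}; I expect the polyhedral argument to be the more economical one.)

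Granting (a), parts (b) and (c) follow from standard facts about ideals whose Rees algebra is a normal Cohen--Macaulay domain. For (c): since $\mathcal{R}(J(G))$ is Cohen--Macaulay, the limit $\lim_{k\to\infty}\depth S/J(G)^k$ exists and equals $n-\ell(J(G))$, where $\ell(J(G))$ is the analytic spread, by the theorem of Eisenbud--Huneke on Cohen--Macaulay Rees algebras. For (b): the strong persistence property $J(G)^{k+1}:J(G)=J(G)^{k}$ holds once $\mathcal{R}(J(G))$ is a normal domain (it can also be read off directly from the vertex-splittable structure of $J(G)$).

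Finally, (d) and (e) concern the edge ideal $I(G)$. Part (d) follows from the known bound $\reg K[I(G)]\le\textup{m}(G)$ for edge subrings, which expresses the regularity, through the $a$-invariant, in terms of the matching number. Part (e) splits into the general lower bound $\textup{m}(G)\le\reg\mathcal{R}(I(G))$ for Rees algebras of edge ideals and the upper bound $\reg\mathcal{R}(I(G))\le\textup{m}(G)+1$, which follows from the known regularity estimates for such Rees algebras. The principal obstacle I anticipate is part (a)---locating and verifying, at the needed level of precision, the result that perfection of $G$ forces normality of $\mathcal{R}(J(G))$ and $K[J(G)]$. Once that is in hand, (b)--(e) reduce to invoking results already available in the literature, the only additional care being to check that the regularity bounds used for (d) and (e) apply in this setting.
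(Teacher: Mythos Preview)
Your proposal is correct, but your route to part (a) differs from the paper's. The paper simply combines Theorem~\ref{vd} (so that $\Delta_G$ is pure vertex decomposable) with \cite[Theorem~3.1]{M}, which says directly that for any pure vertex decomposable complex the Rees algebra and toric ring of its cover ideal are normal Cohen--Macaulay domains; parts (b) and (c) are packaged into that same reference. You instead invoke the fact that every permutation graph is perfect and appeal to the polyhedral theory of cover ideals (Lov\'asz integrality of the covering polytope, Escobar--Gitler--Villarreal, Hochster), deducing (b) and (c) afterwards from Eisenbud--Huneke. The advantage of your approach is that the normality in (a) does not even require the Cohen--Macaulay hypothesis, since perfection holds for \emph{all} permutation graphs; the advantage of the paper's approach is that it ties the corollary to the newly proved Theorem~\ref{vd} and dispatches (a)--(c) with a single citation. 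Amusingly, the ``alternative'' you sketch---induction along the vertex splitting coming from the shedding vertex of Theorem~\ref{vd}---is essentially what underlies Moradi's result, so you are circling the paper's argument without naming it.

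For (d) and (e) you and the paper agree in substance: both reduce to the Herzog--Hibi bounds on $\reg K[I(G)]$ and $\reg\mathcal{R}(I(G))$ in terms of the matching number. The paper cites \cite[Theorem~1]{HH20a} and \cite[Theorem~2.2]{HH20b} explicitly (together with Theorem~\ref{vd}); your ``known bounds'' point to the same statements, and your closing caveat about checking applicability is exactly the right instinct---you should name those two theorems and verify their hypotheses rather than leave it open-ended.
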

	
	\begin{proof}
		(a), (b) and (c) follow from Theorem~\ref{vd} and \cite[Theorem 3.1]{M}. Whereas, (d) and (e) follow from Theorem~\ref{vd}, \cite[Theorem 1]{HH20a} and \cite[Theorem 2.2]{HH20b}.
	\end{proof}
	
	Let $(R,\m,K)$ be either a local ring or a standard graded $K$-algebra, with (graded) maximal ideal $\m$, which is Cohen-Macaulay and admits a canonical module $\omega_R$. The \textit{canonical trace} of $R$ is defined as the ideal
	$$
	\textup{tr}(\omega_R)=\sum_{\varphi\in\textup{Hom}_R(\omega_R,R)}\varphi(\omega_R).
	$$
	
	Following \cite{HHS}, we say that $R$ is \textit{nearly Gorenstein} if $\m\subseteq\textup{tr}(\omega_R)$. It is clear from the definition that any Gorenstein ring is nearly Gorenstein.
	
	We say that a graph $G$ is \textit{nearly Gorenstein} if $S/I(G)$ is a nearly Gorenstein ring.
	
	Next we characterize Gorenstein and nearly Gorenstein permutation graphs. To this aim we use the properties of the poset $P$ associated to a Cohen-Macaulay permutation  graph $G$, employed in the proof of Theorem~\ref{vd}. 
	
	\begin{Theorem}\label{Goren}
		Let $G$ be a permutation graph without isolated vertices. Then 
		\begin{enumerate}
			\item [(a)] $G$ is Gorenstein if and only if $G$ is the disjoint union of edges.
			\item [(b)] $G$ is nearly Gorenstein but not Gorenstein if and only if   $G$ is either $K_n$ or $P_n^c$ for some $n\geq 3$.  
		\end{enumerate}   
	\end{Theorem}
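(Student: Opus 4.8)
The plan is to reduce both parts to the Cohen-Macaulay case via the classical principle that a Gorenstein graph (without isolated vertices) is automatically Cohen-Macaulay and in fact has a perfect matching structure, and then to exploit the poset $P=\comp(G)$ description from the proof of Theorem~\ref{vd}. For part (a): if $G$ is the disjoint union of edges, then $S/I(G)$ is a complete intersection, hence Gorenstein, so one direction is immediate. Conversely, assume $G$ is Gorenstein. Then $G$ is Cohen-Macaulay and unmixed, so by Theorem~\ref{CM} we may take $V(G)=A_1\cup\cdots\cup A_r$ to be the unique partition into maximal chains of the poset $P$, with $\alpha(G)=r$ and $\tau(G)=n-r$. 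The key input is the result of Oboudi and Nikseresht~\cite{ON} characterizing Gorenstein graphs: together with Cohen-Macaulayness this forces $G$ to have no isolated vertices and to satisfy a strong structural restriction (in the vertex-decomposable setting the link and deletion of the shedding vertex $j_1$ must again be Gorenstein or trivial). I would run the induction from the proof of Theorem~\ref{vd}: using the shedding vertex $j_1$ with $\{s\in[n]:i_1\prec s\}=\{j_1\}$, both $G'=G\setminus\{j_1\}$ and $G''=G\setminus N_G[j_1]$ are Cohen-Macaulay permutation graphs, and the Gorenstein property descends to $G''$ (by \cite[Proposition 4.3]{Vi} in the form for Gorenstein rings, or directly since localizing/removing a whisker preserves it). Then $|A_1|\le 2$ for every block, forcing every maximal chain of $P$ to have length at most $2$; combined with unmixedness this means each $A_k$ is exactly an edge of $G$ and the $A_k$ are pairwise non-adjacent (no edges of $G$ run between distinct chains, else $G$ would not be the disjoint union), giving $G=$ disjoint union of $r$ edges.

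For part (b), the strategy is the same reduction. First check that $K_n$ ($n\ge 3$) and $P_n^c$ ($n\ge 3$) are nearly Gorenstein but not Gorenstein: they are permutation graphs (noted in the Preliminaries), one verifies Cohen-Macaulayness directly (for $K_n$, $S/I(K_n)$ has a $1$-dimensional quotient; for $P_n^c$, use Theorem~\ref{CM} with the evident clique partition), and then one computes the canonical trace $\textup{tr}(\omega_{S/I(G)})$ and sees it contains $\m$ but is not the whole ring; the "not Gorenstein" part follows from part (a) since neither $K_n$ nor $P_n^c$ with $n\ge3$ is a disjoint union of edges. For the converse, suppose $G$ is nearly Gorenstein without isolated vertices and not Gorenstein. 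A nearly Gorenstein ring with positive-dimensional spectrum that is not Gorenstein is still Cohen-Macaulay, so again $G$ is Cohen-Macaulay and the poset $P$ and its unique chain partition $V(G)=A_1\cup\cdots\cup A_r$ are available. Now I would argue that near-Gorensteinness together with the combinatorics of $P$ forces either $r=1$ (so $P$ is a single chain, $G=\comp(P)=K_n$) or every chain $A_k$ has length $\le 2$ with a very rigid adjacency pattern between the chains (the "staircase" pattern that yields $P_n^c$). Concretely, one uses the description of $\textup{tr}(\omega_{S/I(G)})$ in terms of the minimal vertex covers: the canonical module of $S/I(G)$ is generated by the squarefree monomials on the maximal independent sets, and $\m\subseteq\textup{tr}(\omega)$ translates into each variable $x_i$ appearing in a suitable "colon" relation among these generators. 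Translating this back through the shedding-vertex induction of Theorem~\ref{vd}, the only graphs surviving are $K_n$ and $P_n^c$.

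I expect the main obstacle to be part (b): the forward direction of part (a) is a fairly clean induction once one invokes \cite{ON}, but pinning down exactly which Cohen-Macaulay permutation graphs are nearly Gorenstein requires a genuinely quantitative control of the canonical trace, and the induction on $n$ via the shedding vertex $j_1$ has to be set up so that near-Gorensteinness is inherited by $G'$ or $G''$ (which is delicate, since near-Gorensteinness does not in general pass to arbitrary induced subgraphs). The cleanest route is probably to first reduce to the \emph{connected} case (a disjoint union $S/I(G_1\otimes\cdots)$ is nearly Gorenstein essentially only when each piece is Gorenstein, by a trace-of-tensor-product argument, which already kills all but one component unless $G$ is a disjoint union of edges — i.e.\ the Gorenstein case), and then, for connected $G$, use the poset $P$: a connected comparability graph $\comp(P)$ forces $P$ to be connected as a poset, and the unique-chain-partition condition plus near-Gorensteinness should leave only the "totally ordered" poset (giving $K_n$) and the specific width-two poset whose comparability graph is $P_n^c$. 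The bookkeeping in this last step — showing no other width-two poset works — is where the real work lies, and I would handle it by directly exhibiting, for any other configuration, a variable $x_i\notin\textup{tr}(\omega_{S/I(G)})$.
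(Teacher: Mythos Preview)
Your sketch for part (a) has two real gaps. First, after invoking the induction to conclude that $G''=G\setminus N_G[j_1]$ is (after deleting isolated vertices) a disjoint union of edges, you assert ``then $|A_1|\le 2$ for every block''. But $G''$ contains no vertex of $A_1$ at all, since $A_1\subseteq N_G[j_1]$; knowing the structure of $G''$ says nothing directly about the size of $A_1$, nor about the remaining $A_k$ with $k\ge 2$ (vertices of those chains may also lie in $N_G[j_1]$ and be removed). Second, the sentence ``the $A_k$ are pairwise non-adjacent (no edges of $G$ run between distinct chains, else $G$ would not be the disjoint union)'' is circular: that $G$ is a disjoint union of edges is precisely the conclusion you are trying to reach. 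The paper avoids both issues by using the \emph{specific} statement of \cite[Theorem 2.3]{ON}: for every independent set $F$ of size $r-2$ one has $G_F\cong C_m^c$ for some $m\ge 4$, and weak chordality of permutation graphs forces $m=4$. Applying this with $F=\{j_1,\dots,j_r\}\setminus\{j_s,j_t\}$ shows directly that the edges $e_\ell=\{i_\ell,j_\ell\}$ pairwise form a gap. A separate contradiction argument, re-using the Claim from Theorem~\ref{vd} applied to the Cohen--Macaulay graph $G\setminus\{j_1,\dots,j_r\}$, then rules out $|V(G)|>2r$.

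For part (b) your plan is substantially more complicated than needed and, as you acknowledge, incomplete at the crucial step. The paper does not compute the canonical trace at all: it simply cites \cite[Theorem~A(Y)]{MV}, which already classifies the independence complexes $\Delta_G$ that are nearly Gorenstein but not Gorenstein as either a discrete set of $n$ points or a path on $n$ vertices. This immediately gives $G=K_n$ or $G=P_n^c$, and since both are permutation graphs the equivalence follows. Your proposed route through inheritance of near-Gorensteinness under the shedding-vertex induction is not known to work (near-Gorensteinness does not generally pass to links or deletions), and the ``staircase'' case analysis on width-two posets is unnecessary once \cite{MV} is invoked.
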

	\begin{proof}
		(a) If $G$ is the disjoint union of edges, then $I(G)$ is a complete intersection, and so $G$ is Gorenstein. Conversely, suppose that $G$ is Gorenstein. Since $G$ is   Cohen-Macaulay, we will adopt the notation and results shown in the proof of Theorem~\ref{vd}. The set $L=\{j_1,\ldots,j_r\}$ is a maximal independent set of $G$. Since $G$ is unmixed, this means that $\alpha(G)=r$. If $\alpha(G)=1$, then $P$ is a chain, which means that $G$ is a complete graph. On the other hand, the only complete graph  which in Gorenstein is $K_2$. Hence $G$ is just an edge. Now, let $\alpha(G)\geq 2$. Set $e_{\ell}=\{i_{\ell},j_{\ell}\}$ for $1\leq \ell\leq r$.
		We show that $G$ is the disjoint union of the edges  $e_1,\ldots,e_r$.
		
		For any $F\subseteq [n]$, we set $G_F=G\setminus N_G[F]$. Then by~\cite[Theorem 2.3]{ON},
		for any independent set $F$ of $G$ with $|F|=r-2$, we have $G_F=C_m^c$, where $C_m$ denotes the cycle graph on $m\geq 4$ vertices. Since any permutation graph is weakly chordal, we obtain $m=4$. Consider a subset $F\subset L$ with $|F|=r-2$. Without loss of generality assume that $F=L\setminus\{j_1,j_2\}$. Since $G_F=C_4^c$, we get $E(G_F)=\{e_1,e_2\}$. This means that $e_1$ and $e_2$ form a gap in $G$. 
		By choosing the set $F$ as $L\setminus\{j_s,j_t\}$ for any $s\neq t$, with the same argument we conclude that any two edges $e_s,e_t$ form a gap. This means that $e_1,\ldots,e_r$ form a gap in $G$. If $|V(G)|=2r$, it follows that $E(G)=\{e_1,\ldots,e_r\}$, as desired. 
		
		Now, by contradiction assume that $|V(G)|>2r$. Since $G$ has no isolated vertices, we have $|A_{\ell}|\geq 2$ for $1\leq\ell\leq r$. Then by the assumption that $|V(G)|>2r$, we may assume that $|A_{\ell}|>2$ for $1\leq\ell\leq s$ and $|A_{\ell}|=2$ for $ \ell> s$, where $s$ is an integer with $1\leq s\leq r$. For any maximal chain $A_{\ell}$ with $|A_{\ell}|>2$, let $t_{\ell}\in A_{\ell}$ be the element with $t_{\ell} \precdot i_{\ell}$. 
		Since $e_1,\ldots,e_r$ form a gap, the set  
		$L'=\{i_1,\ldots,i_r\}$ is an independent set of $G$. For any $1\leq\ell\leq s$, choose $F_{\ell}\subset L'$ such that $|F|=r-2$ and $i_{\ell}\notin F$. Then $G_{F_{\ell}}=C_4^c$. Moreover, $e_\ell=\{i_\ell,j_\ell\}$ is and edge of $G_{F_{\ell}}$. This together with $t_\ell\in A_\ell$ implies that $t_\ell\in N_G(F_\ell)$. In other words, for any $1\leq\ell\leq s$ there exists $h_\ell\neq \ell$ such that $\{t_\ell,i_{h_\ell}\}\in E(G)$. This in fact means that for any $1\leq\ell\leq s$,  $t_\ell \prec i_{h_\ell}$. Hence, $\{i_{h_\ell},i_\ell\}\subseteq \{k:\ t_\ell\prec k\}$. We show that this is not possible.
		
		We set $G_1=G\setminus \{j_1\}$ and $G_p=G_{p-1}\setminus \{j_{p}\}$ for $2\leq p\leq r$. Since $e_1,\ldots,e_r$ form a gap in $G$, for any integer $p$ we have
		$\{k\in [n]:\ i_p\prec k\}=\{j_p\}$. Then as is shown in the proof of Theorem~\ref{vd}, each $j_p$ is a shedding vertex of $G_p$ and each $G_p$ is a Cohen-Macaulay permutation graph. In particular $G_r=G\setminus \{j_1,\ldots,j_r\}$ is Cohen-Macaulay. Let $P'=P\setminus \{j_1,\ldots,j_r\}$. Then clearly $G_r=\comp(P')$ and $P'$ is the disjoint union of the maximal chains $A'_\ell=A_\ell\setminus\{j_\ell\}$. Moreover, $i_\ell$ is the maximal element of $A'_\ell$ for all $\ell$. As the proof of Theorem~\ref{vd}(Claim) shows, there should exist an integer $1\leq \ell\leq s$ such that $\{k\in V(P'):\ t_\ell\prec k\}=\{i_\ell\}$. This contradicts to $\{i_{h_\ell},i_\ell\}\subseteq \{k:\ t_\ell\prec k\}$ for all $1\leq\ell\leq s$. So we have $|V(G)|=2r$, and this concludes the proof of (a).
		
		(b) By~\cite[Theorem A(Y)]{MV}, $G$ is nearly Gorenstein but not Gorenstein if and only if $\Delta_G$ is  isomorphic either to the disjoin union of $n$ vertices or to a path on $n$ vertices. This implies that either $G=K_n$ or $G=P_n^c$. Since both $K_n$ and $P_n^c$ are Cohen-Macaulay permutation graphs, the result follows.
	\end{proof}
	
	In the next proposition we give a combinatorial description for the $a$-invariant $a(S/I(G))$ of the ring $S/I(G)$, when $G$ is a Cohen-Macaulay permutation graph. A graded ideal $I\subset S$ is called {\em Hilbertian} if $P_{S/I}(t)=H(S/I,t)$ for all $t\geq 0$, where $P_{S/I}(t)$ and $H(S/I,t)$ denote the Hilbert polynomial and the Hilbert function of $S/I$, respectively.
	
	\begin{Proposition}\label{a-inv}
		Let $G$ be a Cohen-Macaulay permutation graph on $n$ vertices. Then
		\begin{enumerate}
			\item [(a)]  $\reg(S/I(G))=\im(G)$.
			\item [(b)] $a(S/I(G))=\im(G)+\tau(G)-n$.
			\item [(c)] $I(G)$ is Hilbertian if and only if $\tau(G)+\im(G)<n$.  
		\end{enumerate}
	\end{Proposition}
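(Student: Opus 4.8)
The plan is to establish (a) first; parts (b) and (c) then follow from the standard dictionary for Cohen--Macaulay graded rings. Since $S/I(G)$ is Cohen--Macaulay, $a(S/I(G))=\reg(S/I(G))-\dim(S/I(G))$, and $\dim(S/I(G))=\alpha(G)=n-\tau(G)$ because the complement of a largest independent set of $G$ is a smallest vertex cover; so (a) immediately gives $a(S/I(G))=\im(G)+\tau(G)-n$, which is (b). For (c), one uses that for a Cohen--Macaulay graded ring the Hilbert function and the Hilbert polynomial coincide in all nonnegative degrees if and only if the $a$-invariant is negative; hence $I(G)$ is Hilbertian if and only if $a(S/I(G))<0$, that is, if and only if $\im(G)+\tau(G)<n$ by (b).

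Thus everything reduces to the equality $\reg(S/I(G))=\im(G)$ of part (a). The bound $\im(G)\le\reg(S/I(G))$ holds for every graph (Katzman), so the real content is the reverse inequality, which I would prove by induction on $|V(G)|$, run in parallel with the inductive argument in the proof of Theorem~\ref{vd}. We may assume $G$ has no isolated vertices; if $G$ has no edges the inequality is trivial. Otherwise, keeping the notation of the proof of Theorem~\ref{vd}, let $v=j_{1}$ be the shedding vertex produced there, so that $G\setminus\{v\}$ and $G\setminus N_{G}[v]$ are again Cohen--Macaulay permutation graphs; recall also the two facts established in that proof, namely $\{s\in[n]:i_{1}\prec s\}=\{v\}$ and $N_{G}[i_{1}]\subseteq N_{G}[v]$. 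By the induction hypothesis, $\reg(S/I(G\setminus\{v\}))=\im(G\setminus\{v\})$ and $\reg(S/I(G\setminus N_{G}[v]))=\im(G\setminus N_{G}[v])$.

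Two combinatorial observations then finish the proof. Since $G\setminus\{v\}$ is an induced subgraph of $G$, every induced matching of $G\setminus\{v\}$ is one of $G$, so $\im(G\setminus\{v\})\le\im(G)$. And since $N_{G}[i_{1}]\subseteq N_{G}[v]$, no vertex of $G\setminus N_{G}[v]$ is adjacent to $v$ or to $i_{1}$, so the edge $\{i_{1},v\}$ forms a gap with every edge of $G\setminus N_{G}[v]$; therefore adjoining $\{i_{1},v\}$ to any induced matching of $G\setminus N_{G}[v]$ gives an induced matching of $G$, whence $\im(G\setminus N_{G}[v])+1\le\im(G)$. Plugging these into the standard recursion $\reg(S/I(G))\le\max\{\reg(S/I(G\setminus\{v\})),\ \reg(S/I(G\setminus N_{G}[v]))+1\}$ gives $\reg(S/I(G))\le\im(G)$, and together with Katzman's bound this proves (a).

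The main obstacle is precisely this reverse inequality in (a). It goes through only because a Cohen--Macaulay permutation graph supplies, via the proof of Theorem~\ref{vd}, a shedding vertex $v$ together with an edge $\{i_{1},v\}$ with $N_{G}[i_{1}]\subseteq N_{G}[v]$ such that $G\setminus N_{G}[v]$ remains a Cohen--Macaulay permutation graph --- exactly the data needed to run the inductions on $\reg$ and on $\im$ simultaneously. One also has to dispatch the degenerate steps, such as $N_{G}[v]=V(G)$ or deletions producing isolated vertices, but these change neither invariant. (Alternatively, one could bound $\reg(S/I(G))$ above by the co-chordal cover number of $G$, after Woodroofe, and cover $E(G)$ by $\im(G)$ co-chordal subgraphs built from the chain partition of $P$, but the induction above is cleaner.)
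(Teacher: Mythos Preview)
Your proposal is correct. Parts (b) and (c) match the paper's argument essentially verbatim: use $a=\reg-\dim$ for Cohen--Macaulay rings, $\dim(S/I(G))=n-\tau(G)$, and the criterion that Hilbertian is equivalent to $a<0$.

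For part (a), however, you take a genuinely different route. The paper dispatches (a) in one line: permutation graphs are weakly chordal (Gallai), and Woodroofe's theorem gives $\reg(S/I(G))=\im(G)$ for every weakly chordal graph. Your argument instead runs an induction on $|V(G)|$, reusing the shedding vertex $v=j_1$ and the inclusion $N_G[i_1]\subseteq N_G[v]$ extracted in the proof of Theorem~\ref{vd}, together with Katzman's lower bound and the deletion/link recursion for regularity. Both arguments are valid. The paper's approach is shorter, cites a black box, and in fact proves $\reg(S/I(G))=\im(G)$ for \emph{all} permutation graphs, not only the Cohen--Macaulay ones. Your approach is more self-contained relative to the paper --- it does not invoke the weakly-chordal machinery and instead exploits exactly the structural output of Theorem~\ref{vd} --- but it genuinely needs the Cohen--Macaulay hypothesis to keep the induction going, since you require $G\setminus\{v\}$ and $G\setminus N_G[v]$ to again be Cohen--Macaulay permutation graphs. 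Your parenthetical about Woodroofe's co-chordal cover bound is close in spirit to what the paper actually does; had you observed that permutation graphs are weakly chordal, you would have landed on the paper's one-line proof.
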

	
	\begin{proof}
		(a) follows from the fact  that any permutation graph is weakly chordal, together with \cite[Theorem 14]{W}, which shows that if $G$ is a weakly chordal graph, then $\reg(S/I(G))=\im(G)$. 
		
		(b)   
		Since $G$ is Cohen-Macaulay, we know that the degree of the $h$-polynomial $h(t)$ in the Hilbert series of $S/I(G)$ is equal to $\reg(S/I(G))$, see ~\cite[Corollary B.28]{V}. Hence, 
		$a(S/I(G))=\reg(S/I(G))-d$, where $d=\dim(S/I(G))$.
		Using (a) and the equality $d=n-\tau(G)$, we obtain $a(S/I(G))=\im(G)+\tau(G)-n$.

		(c) By \cite[Lemma 5.3]{NRV}, the ideal $I(G)$ is Hilbertian if and only if $a(S/I(G))<0$. By (ii) we have $a(S/I(G))<0$ if and only if $\tau(G)+\im(G)<n$.
	\end{proof}
	
	A graph $G$ is called {\em bi-Cohen-Macaulay} if $S/I(G)$ and $S/J(G)$ are Cohen-Macaulay rings.
	Combining Theorem~\ref{vd} and Proposition~\ref{a-inv}(a) we obtain
	
	\begin{Proposition}\label{biCM}
		Let $G$ be a permutation graph on $n$ vertices. Then
		$G$ is  bi-Cohen-Macaulay if and only if $G$ is an unmixed vertex decomposable graph and $\im(G)=1$.  
	\end{Proposition}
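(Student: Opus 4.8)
The plan is to transfer everything to the edge ideal side via Alexander duality. Recall that the cover ideal $J(G)$ is the Alexander dual of the edge ideal, $J(G)=I(G)^\vee$, and conversely $I(G)=J(G)^\vee$ (see \cite{HH}). By the Eagon--Reiner theorem \cite{HH}, the ring $S/J(G)$ is Cohen--Macaulay if and only if $J(G)^\vee=I(G)$ has a linear resolution. Since $I(G)$ is generated in degree $2$ whenever $G$ has an edge, $I(G)$ has a linear resolution exactly when $\reg(S/I(G))=1$. (If $E(G)=\emptyset$ then, with the usual convention, $S/J(G)$ is the zero ring and $G$ is not bi-Cohen--Macaulay, while $\im(G)=0$; so we may assume $E(G)\neq\emptyset$.) Thus the content of the statement is the equivalence
$$\big(\,S/I(G)\text{ is Cohen--Macaulay}\ \text{and}\ \reg(S/I(G))=1\,\big)\iff\big(\,G\text{ unmixed, vertex decomposable, }\im(G)=1\,\big),$$
and it remains to combine Theorem~\ref{vd} with Proposition~\ref{a-inv}(a).

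For the forward direction, assume $G$ is bi-Cohen--Macaulay. Then $S/I(G)$ is Cohen--Macaulay, so $G$ is a Cohen--Macaulay permutation graph; by Theorem~\ref{vd} it is unmixed and vertex decomposable, and since $G$ is Cohen--Macaulay, Proposition~\ref{a-inv}(a) gives $\reg(S/I(G))=\im(G)$. On the other hand $S/J(G)$ is Cohen--Macaulay, so by the first paragraph $\reg(S/I(G))=1$, whence $\im(G)=1$. For the converse, assume $G$ is unmixed, vertex decomposable, and $\im(G)=1$. By the implication (ii)$\Rightarrow$(i) of Theorem~\ref{vd}, $G$ is Cohen--Macaulay, so $S/I(G)$ is Cohen--Macaulay; moreover Proposition~\ref{a-inv}(a) now applies and yields $\reg(S/I(G))=\im(G)=1$. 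Hence $I(G)$ has a linear resolution, so by the Eagon--Reiner theorem $S/J(G)=S/I(G)^\vee$ is Cohen--Macaulay. Together with the Cohen--Macaulayness of $S/I(G)$ this shows $G$ is bi-Cohen--Macaulay.

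I do not expect a genuine obstacle here: the argument is bookkeeping on top of Theorem~\ref{vd}, Proposition~\ref{a-inv}(a), and the Eagon--Reiner/Alexander-duality dictionary. The only point that needs a word of care is that Proposition~\ref{a-inv}(a) is stated under the hypothesis that $G$ be Cohen--Macaulay, and one must check this hypothesis is available in \emph{both} directions of the proof --- which it is, since in either case the assumption forces $S/I(G)$ to be Cohen--Macaulay. A secondary triviality to dispatch is the edgeless case, handled by the parenthetical remark above.
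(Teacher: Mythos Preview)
Your argument is correct and follows essentially the same route as the paper: reduce the Cohen--Macaulayness of $S/J(G)$ to a regularity condition on $I(G)$ via Alexander duality, then invoke Theorem~\ref{vd} and Proposition~\ref{a-inv}(a). The only cosmetic difference is that the paper uses Terai's formula $\pd(S/J(G))=\reg(I(G))$ together with $\dim(S/J(G))=n-2$, whereas you use the Eagon--Reiner theorem directly; these are equivalent here.
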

	
	\begin{proof}
		Having Theorem~\ref{vd}, it is enough to show that $S/J(G)$ is Cohen-Macaulay if and only if $\im(G)=1$.
		By~\cite[Theorem 2.1]{T} and Proposition~\ref{a-inv}(a) we have $\pd(S/J(G))=\reg(I(G))=\im(G)+1$. So $\depth(S/J(G))=n-\im(G)-1$. Moreover, since $J(G)$ is unmixed of height two, we have $\dim(S/J(G))=n-2$. So $S/J(G)$ is Cohen-Macaulay if and only if $\im(G)=1$. 
	\end{proof}
	
	\bigskip

	\noindent\textbf{Acknowledgment.}
	A. Ficarra was partly supported by INDAM (Istituto Nazionale di Alta Matematica), and also by the Grant JDC2023-051705-I funded by
	MICIU/AEI/10.13039/501100011033 and by the FSE+. S. Moradi is supported by the Alexander von Humboldt Foundation.


\begin{thebibliography}{99}
		
		\bibitem{BC} T. Biyikou\u{g}lu, Y. Civan, Vertex-decomposable graphs, codismantlability Cohen-Macaulayness, and Castelnuovo-Mumford regularity, Electron. J. Combin. {\bf 21} (2014), no. 1, Paper 1.1, 17 pp.
		
		\bibitem{BW} A. Björner, M. Wachs, Shellable nonpure complexes and posets. II,Trans. Amer. Math. Soc. {\bf 349} (1997), no. 10, 3945–3975.
		
		\bibitem{CDKKV} P. V. Cheri, D. Dey, A. K., N. Kotal, D. Veer, Cohen-Macaulay permutation graphs, Math. Scand. {\bf130} (2024), no. 3, 419--431.
		
		\bibitem{CN} D. Cook and U. Nagel, Cohen-Macaulay graphs and face vectors of flag complexes, SIAM J. Discrete Math. 26 (2012), no. 1, 89–101.
		
		\bibitem{CRT} M. Crupi, G. Rinaldo, N. Terai, Cohen--Macaulay edge ideals whose height is half of the number of vertices, Nagoya Math. J. {\bf 201} (2011), 116--130.
		
		\bibitem{DE} A. Dochtermann, A. Engström, Algebraic properties of edge ideals via combinatorial topology, Electron. J. Combin. {\bf 16} (2009), no. 2.
		
		\bibitem{EPL}
		S. Even, A. Pnueli, A. Lempel, Permutation graphs and transitive graphs,
		Journal of the Association for Computing Machinery,
		{\bf 19} (1972),  400--410 .
		
		\bibitem{G} T. Gallai, Transitiv orientierbare Graphen, Acta Math. Acad. Sci. Hungar., {\bf 18} (1967), 25–66.
		
		\bibitem{GRR} S. V. Gervacio, T.A. Rapanut, P.F. Ramos, Characterization and construction of permutation graphs, Open Journal of Discrete Mathematics 3 (2013)   33-38.
		
		\bibitem{HH05} J. Herzog, T. Hibi, Distributive lattices, bipartite graphs and Alexander duality, J. Algebraic Combin. {\bf 22} (2005) 289--302.
		
		\bibitem{HH} J.~Herzog, T.~Hibi, \emph{Monomial ideals}, Graduate texts in Mathematics {\bf 260}, Springer, 2011.
		
		\bibitem{HH20a} J. Herzog, T. Hibi, The Regularity of Edge Rings and Matching Numbers, Mathematics,
		(2020), {\bf8}(1), 39.
		
		\bibitem{HH20b} J. Herzog, T. Hibi, Matching numbers and the regularity of the Rees algebra of an edge
		ideal, Ann. Comb. {\bf24}, (2020), 577–586.
		
		\bibitem{HHS} J. Herzog, T. Hibi, D. I. Stamate, The trace of the canonical module, Israel Journal of Mathematics 233 (2019), 133--165.
		
		\bibitem{HHZ} J.~Herzog, T.~Hibi, X.~Zheng, Cohen-Macaulay chordal graphs, J. Combin. Theory Ser. A {\bf 113} (2006), no. 5, 911--916. 
		
		\bibitem{HQ} J.~Herzog,  A.A. Qureshi, Persistence and stability properties of powers of ideals, J. Pure Appl. Algebra, {\bf 219} (2015), 530–542.
		
		\bibitem{HHKO} T. Hibi, A. Higashitani, K. Kimura, A. O'Keefe, Algebraic study on Cameron-Walker graphs, J. Algebra {\bf 422} (2015), 257--269.
		
		\bibitem{L}  V. Limouzy, Seidel Minor, Permutation Graphs and Combinatorial Properties, In: Lecture Notes in Computer Science Volume 6506, Springer, Berlin, 2010, pp. 194-205.
		
		\bibitem{MMCRTY} M. Mahmoudi, A. Mousivand, M. Crupi, G. Rinaldo, N. Terai, S. Yassemi, Vertex decomposability and regularity of very well--covered graphs, J. Pure Appl. Algebra {\bf 215} (2011), 2473-2480.
		
		\bibitem{MV} S. Miyashita,  M. Varbaro, The canonical trace of Stanley-Reisner rings that are Gorenstein on the punctured spectrum, arXiv preprint arXiv:2412.12860 (2024).
		
		\bibitem{M} S. Moradi, Normal Rees algebras arising from vertex decomposable simplicial complexes, arXiv preprint arXiv:2311.15135 (2023).
		
		\bibitem{MK} S. Moradi, F. Khosh-Ahang,  On vertex decomposable simplicial complexes and their Alexander duals. Math. Scand. {\bf 118} (2016), no. 1, 43--56.
		
		\bibitem{NRV} T. Nguyen, J. Rajchgot, A. Van Tuyl, Three invariants of geometrically vertex decomposable ideals, Pacific Journal of Mathematics, Vol. {\bf333}(2024), No. 2, 357--390.
		
		\bibitem{ON} M. R. Oboudi, A. Nikseresht, Some combinatorial characterizations of Gorenstein graphs with independence number less than four, Iran. J. Sci. Technol. Trans. A Sci. {\bf 44} (2020), no. 6, 1667--1671.
		
		\bibitem{PLE}
		A.~Pnueli, A.~Lempel, S.~Even,
		Transitive orientation of graphs and identification of permutation
		graphs, Canadian J. Math., {\bf 23}  (1971), 160--175.
		
		\bibitem{PB} J. S. Provan, L. J. Billera, Decompositions of simplicial complexes related to diameters of convex polyhedra, Math. Oper. Res. {\bf 5} (1980), no. 4, 576–594. MR 593648 (82c:52010)
		
		\bibitem{T} N. Terai,  Alexander duality theorem and Stanley-Reisner rings. Free resolutions of coordinate rings of projective varieties and related topics, (Japanese) (Kyoto, 1998). S\"{u}rikaisekikenky\"{u}sho K\"{o}ky\"{u}roku no. 1078 (1999), 174--184.
		
		\bibitem{V} W. V. Vasconcelos, Computational methods in commutative algebra and algebraic geometry, volume 2 of Algorithms and Computation in Mathematics. Springer-Verlag, Berlin, 1998. With chapters by David Eisenbud, Daniel R. Grayson, Jürgen Herzog and Michael Stillman.
		
		\bibitem{Vi} R. H. Villarreal,  Cohen-Macaulay graphs, Manuscripta Math. {\bf 66} (1990), no. 3, 277--293.
		
		\bibitem{W1} R. Woodroofe,  Vertex decomposable graphs and obstructions to shellability, Proc. Amer. Math. Soc. {\bf 137} (2009), no. 10, 3235--3246.
		
		\bibitem{W} R. Woodroofe, Matchings, coverings, and Castelnuovo-Mumford regularity, J. Commut. Algebra {\bf 6} (2014), no. 2, 287--304.
		
		
	\end{thebibliography}
\end{document}